\newcommand{\EMPTY}{\varnothing}
\newcommand{\bbOS}{\mathbb {OS}}
\DeclareMathOperator{\Iso}{Iso}
\DeclareMathOperator{\Met}{{\mathcal M}}
\newcommand{\bbN}{\mathbb N}
\newcommand{\bbQ}{{\mathbb Q}}
\newcommand{\bbE}{\mathbb E}
\newcommand{\bbF}{\mathbb F}
\newcommand{\bbX}{\mathbb X}
\newcommand{\bbR}{\mathbb R}
\newcommand{\bbC}{\mathbb C}
\newcommand{\bbK}{\mathbb K}
\newcommand{\bbU}{\mathbb U}
\newcommand{\bbY}{\mathbb Y}
\newcommand{\e}{\varepsilon}
\newtheorem{theorem}{Theorem}[section]
\newtheorem{question}[theorem]{Question}
\newtheorem{lemma}[theorem]{Lemma}
\newtheorem{appthm}{Theorem}
\theoremstyle{definition}
\newtheorem{definition}[theorem]{Definition}
\newcounter{my_enumerate_counter}
\newcommand{\pushcounter}{\setcounter{my_enumerate_counter}{\value{enumi}}}
\newcommand{\popcounter}{\setcounter{enumi}{\value{my_enumerate_counter}}}
\DeclareMathOperator{\dom}{dom}
\DeclareMathOperator{\Sym}{Sym}
\DeclareMathOperator{\cl}{cl}
\newcommand{\bbZ}{\mathbb Z}
\DeclareMathOperator{\Aut}{Aut}
\DeclareMathOperator{\graph}{graph}
\DeclareMathOperator{\proj}{proj}
\newcommand{\restrict}{\!\upharpoonright\!}
\address{Department of Mathematics, University of Toronto, Toronto, Ontario M5S 2E4, Canada}
\email{elliott@math.toronto.edu}
\address{Department of Mathematics and Statistics, York University, 4700 Keele St., Toronto, Ontario, M3J 1P3, Canada}
\email{ifarah@mathstat.yorku.ca}
\address{Department of Mathematics, University of Houston, Houston, TX 77204-3476}
\email{vern@math.uh.edu}
\address{Department of Mathematics, Statistics, and Computer Science (M/C 249), University of Illinois at Chicago, 851 S. Morgan St., Chicago, IL 60607-7045, USA}
\email{rosendal.math@gmail.com}
\address{Department of Mathematics, Purdue University, 150 N. University St., West Lafayette IN 47906, USA}
\email{atoms@purdue.edu}
\address{Department of Mathematical Sciences, University of Copenhagen, Universitetsparken 5, 2100 Copenhagen, Denmark}
\email{asgert@math.ku.dk}
\title{The isomorphism relation for separable C*-algebras}
\author[Elliott et al.]{George A. Elliott, Ilijas Farah, Vern Paulsen, Christian Rosendal, Andrew S.  Toms and Asger T\"ornquist}
\date{\today}
\begin{document}

\begin{abstract}
We prove that the isomorphism relation for separable C$^*$-algebras, and also  the relations of complete and $n$-isometry for operator spaces and systems, are Borel reducible to the orbit equivalence relation of a Polish group action on a standard Borel space.
\end{abstract}

\maketitle

\section{Introduction}

The problem of classifying a collection of objects up to some notion of isomorphism can usually be couched as the study of an analytic equivalence relation on a standard Borel space parameterizing the said objects.  Such relations admit a notion of comparison, {\it Borel reducibility}, which allows one to assign a degree of complexity to the classification problem.  If $X$ and $Y$ are standard Borel spaces admitting equivalence relations $E$ and $F$ respectively then we say that $E$ is {\it Borel reducible} to $F$, written $E \leq_B F$, if there is a Borel map $\Theta:X \to Y$ such that
\[
xEy \iff \Theta(x)F\Theta(y).
\]
In other words, $\Theta$ carries equivalence classes to equivalence classes injectively.  We view $E$ as being ``less complicated" than $F$. There are some particularly prominent degrees of complexity in this theory which serve as landmarks for classification problems in general.  For instance, a relation $E$ is {\it classifiable by countable structures} (CCS) if it is Borel reducible to the isomorphism relation for countable graphs.  Classification problems in functional analysis (our interest here) tend not to be CCS, but may nevertheless be ``not too complicated" in that they are Borel reducible to the orbit equivalence relation of a Borel action of a Polish group on a standard Borel space;  this property is known as being {\it below a group action}.

The connections between Borel reducibility and operator algebras have received considerable attention lately.  Using Hjorth's theory of turbulence developed in \cite{Hj:Book}, it has been shown that several classes of operator algebras
are not CCS. This applies to von Neumann factors of every type \cite{sato09a}, 
ITPFI$_2$ factors \cite{SaTo:Turbulence}, 
separable simple unital nuclear C$^*$-algebras \cite{FaToTo:Turbulence}, 
and spaces of irreducible representations of non-type I C$^*$-algebras, \cite{KerLiPi:Turbulence,Fa:Dichotomy}. On the other hand, Elliott's K-theoretic classification of AF algebras together with the Borel computability of K-theory \cite{FaToTo:Descriptive} show that AF algebras are CCS.

We are interested in classifying separable C$^*$-algebras, operator spaces, and operator systems.  The classification problem for nuclear simple separable C$^*$-algebras was studied in \cite{FaToTo:Turbulence}, where the isomorphism relation was shown to be below a group action.  Establishing this upper bound was rather involved;  it required Borel versions of Kirchberg's $\mathcal{O}_2$ embedding and absorption theorems.  It of course invited the question of whether isomorphism of \emph{all} separable C$^*$-algebras is below a group action.  Here we give a surprisingly simple proof of the following result.

\begin{theorem}\label{T1} 
Each of the following equivalence relations 
is below a group action: 
\begin{enumerate}
\item  The isomorphism relation for separable C$^*$\! -\! algebras.
\item Complete isometry, as well as $n$-isometry for any $n$, 
 of separable operator spaces.
\item Complete isometry, as well as $n$-isometry for any $n$, 
 of separable operator systems. 
\item Unital, complete isometry of separable operator systems.
\item  Unital, complete order isomorphism of separable operator systems.
 \end{enumerate}
\end{theorem}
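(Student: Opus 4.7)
The plan is to parameterize each class of separable objects as a standard Borel space of codes inside a fixed separable $B(H)$, and to exhibit each isomorphism relation as the orbit equivalence of a natural Polish group action on (a stabilization of) that parameter space. Let $\Gamma$ be the Borel subset of $B(H)^{\bbN}$ consisting of sequences $(a_n)$ of contractions; for (1) restrict to those sequences generating a $C^*$-subalgebra, for (2) those generating a closed subspace, and analogously for (3)--(5), keeping track of the additional structure (self-adjoint span, inclusion of the unit, positive cone). In each case the map sending $\gamma=(a_n)$ to the object $A_\gamma$ it generates, viewed inside the Effros Borel space $F(B(H))$, is Borel.

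The acting Polish group will be the unitary group $U(H)$ with the strong operator topology, acting on $\Gamma$ entry-wise by conjugation (possibly enlarged by $S_\infty$ to absorb relabelings of the generating sequence). Its raw orbits capture unitary conjugacy of images in $B(H)$, which is strictly stronger than abstract isomorphism. To bridge this gap, I would compose with the infinite ampliation $\gamma\mapsto\tilde\gamma:=(a_n\otimes 1_{\ell^2})$ on $H\otimes\ell^2$. By Voiculescu's non-commutative Weyl--von Neumann theorem, any two faithful essential representations of the same separable $C^*$-algebra on $H\otimes\ell^2$ are approximately unitarily equivalent; a standard telescoping argument then upgrades approximate to exact unitary equivalence on the stabilized code. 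This identifies abstract isomorphism of $A_\gamma$ and $A_{\gamma'}$ with $U(H\otimes\ell^2)$-orbit equivalence of $\tilde\gamma$ and $\tilde{\gamma'}$.

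For (2)--(5), the same stabilize-then-unitarize framework applies with Voiculescu's theorem replaced by its operator-space and operator-system analogues: Arveson's extension theorem, the Paulsen--Smith matricial Hahn--Banach theorem, and the Choi--Effros theory provide, in each setting, absorbing embeddings into $B(H\otimes\ell^2)$ whose approximate unitary equivalence captures precisely $n$-isometry, complete isometry, unital complete isometry, and unital complete order isomorphism respectively. In all cases the acting group is (a variant of) $U(H\otimes\ell^2)$.

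The main obstacle is carrying out the ``approximate $\Rightarrow$ exact'' upgrade \emph{Borel-measurably}: at each stage of the telescoping, a unitary satisfying a prescribed norm estimate must be selected uniformly in the parameter $\gamma$, so that the full reduction $\Gamma \to \Gamma$, $\gamma \mapsto \tilde\gamma$ is Borel as a map into a space carrying the Polish action. This is a Borel selection problem of the kind resolved in \cite{FaToTo:Turbulence}, and it is the technical heart of the argument; one expects it to be substantially simpler here, since nuclearity plays no role and one does not need a Borel $\mathcal{O}_2$-embedding---only Voiculescu-type absorption for the universal essential representation.
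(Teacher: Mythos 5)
Your approach has a genuine gap at its central step: the ``approximate $\Rightarrow$ exact'' upgrade is not merely a Borel-selection difficulty, it is false. Voiculescu's theorem gives that any two faithful essential representations of a separable C*-algebra are \emph{approximately} unitarily equivalent, but exact unitary equivalence of the (infinitely ampliated) codes is a strictly finer relation than isomorphism of the generated algebras, and no telescoping argument can close this gap. Concretely, take $A=C[0,1]$ and let $\gamma$, $\gamma'$ generate the multiplication representations on $L^2([0,1],\lambda)$ and $L^2([0,1],\mu)$, where $\lambda$ is Lebesgue measure and $\mu$ is a fully supported nonatomic measure singular to $\lambda$. Both are faithful and essential, and they remain so after tensoring with $1_{\ell^2}$; yet by spectral multiplicity theory the ampliations are not unitarily equivalent, since unitary equivalence of representations of an abelian C*-algebra forces equivalence of the underlying measure classes. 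Thus $A_\gamma\cong A_{\gamma'}$ while $\tilde\gamma$ and $\tilde\gamma'$ lie in different $U(H\otimes\ell^2)$-orbits, so the map $\gamma\mapsto\tilde\gamma$ is not a reduction. The Elliott-type intertwining that upgrades approximate to exact equivalence needs two-sided approximate intertwinings with summable errors; Voiculescu only supplies a sequence of unitaries with no convergence control, and the counterexample shows none can exist. (A further, secondary issue: orbit equivalence of generating sequences under $U(H)$, even enlarged by $S_\infty$, tracks conjugacy carrying the given generators to the given generators, not existence of some abstract isomorphism; and the claimed operator-space analogues of Voiculescu absorption via Arveson or Paulsen--Smith do not exist in the form you need.)

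For comparison, the paper avoids $B(H)$ and representation theory entirely. It encodes a C*-algebra (or operator space/system) as a \emph{Polish structure}: a closed subset of the Urysohn space $\bbU$ together with closed relations coding the graphs of the algebraic operations. The key point (Theorem \ref{t.bga}) is a Borel-uniform version of the Katetov construction and of the homogeneity of $\bbU$, which lets one replace any code by a ``homogeneously placed'' isomorphic copy; for such copies, isometric isomorphism of the structures coincides with conjugacy by an element of $\Iso(\bbU)$ acting on the code space. This sidesteps exactly the obstruction above: $\bbU$ is ultrahomogeneous, so partial isometries between suitably chosen dense subsets \emph{do} extend to global isometries, whereas partial unitary equivalences between subalgebras of $B(H)$ do not extend to global unitaries.
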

\noindent Part (1)  of course improves on one of the main results
of \cite{FaToTo:Turbulence}. Parts (2--5) give a partial answer to a question stated by Ed Effros during the January 2012 meeting ``Set theory and C*-algebras" at the American Institute of Mathematics. Theorem \ref{T1} follows from a more general result proved in \S\ref{s.PS}. We show that $\Iso(\bbU)$, the isometry group of the Urysohn 
metric space $(\bbU,\delta)$, plays a role for the isometry relation  of what we term {\it Polish structures in a countable signature} (see \cite{BYBHU,FaHaSh:Model2}), analogous to the role played by the Polish group $S_\infty$ of all permutations of $\mathbb{N}$ for the isomorphism relation of countable structures.

\section{Polish structures}\label{s.PS}

The main ingredients for proving Theorem \ref{T1} are the notion of a \emph{Polish structure}, which we introduce now, and Theorem \ref{t.bga} below.

\begin{definition}\label{d.polstruct}
Let $\mathcal L=(l_1,\ldots)$ be a finite or infinite sequence in $\bbN$. A \emph{Polish $\mathcal L$-structure}\footnote{We consciously avoid the term ``metric structure'' since this is already used for a slightly different notion in continuous logic.
} is a triple 
\[
\bbX=(X,d^{\bbX},(F_n^{\bbX})),
\]
 where $(X,d)$ is a separable complete metric space, called the \emph{domain}, and $F_n^{\bbX}\subseteq X^{l_n}$ is closed in the product topology. We think of $F_n^{\bbX}$ as a relation on $X$ and call $l_n$ the \emph{arity} of $F_n^{\bbX}$. The sequence $\mathcal L$ is called the \emph{signature} or \emph{arity sequence} of the structure $\bbX$. (Below we will suppress the superscript $\bbX$ whenever possible.)

Two Polish structures $\bbX$ and $\bbY$ with the same signature $\mathcal L$ are said to be \emph{isometrically isomorphic} if there is an isometric bijection $h: X\to Y$ such that for all $n$ and $(x_1,\ldots,x_{l_n})\in X^{l_n}$ we have
$$
(x_1,\ldots,x_{l_n})\in F_n^{\bbX}\iff (h(x_1),\ldots, h(x_{l_n}))\in F_n^{\bbY}.
$$
\end{definition}

Let $(Y,d)$ be a metric space, let $\mathcal L=(l_1,\ldots)$, and define $M(\mathcal L,Y,d)\subseteq (F(Y)\setminus\{\EMPTY\})\times \prod_{n} F(Y^{l_n})$ by
$$
M(\mathcal L,Y,d)=\{(X,(F_n)): F_n\subseteq X^{l_n}\}.
$$
This set is Borel (by \cite[12.11]{Ke:Classical}), and to each $\bbX=(X,(F_n))\in M(\mathcal L,Y,d)$ we have the Polish $\mathcal L$-structure $(X,d\restrict X,(F_n))$, which we also denote by $\bbX$. For $\bbX, \bbY\in M(\mathcal L,Y,d)$, write $\bbX\simeq^{M(\mathcal L,Y,d)}\bbY$ if $\bbX$ and $\bbY$ are isomorphic Polish $\mathcal L$-structures.

When $(Y,d)$ is the Urysohn metric space $\bbU$, we will usually write $M(\mathcal L)$ rather than $M(\mathcal L,\bbU,\delta)$. This is motivated by the fact that every separable metric space can be isometrically embedded into $\bbU$, and so every Polish $\mathcal L$-structure is isomorphic to some $\bbX\in M(\mathcal L)$. Thus $M(\mathcal L)$ provides a parametrization of all Polish $\mathcal L$-structures as a standard Borel space. It is not hard to see that that this moreover is a good standard Borel parametrization in the sense of \cite{FaToTo:Turbulence}.

The group $\Iso(\bbU)$ acts diagonally on $\bbU^{l_i}$ for each $i$, and so it acts naturally on $M(\mathcal L)$ by $\sigma\cdot (X,(F_n))=(\sigma\cdot X,(\sigma\cdot F_n))$. This induces the orbit equivalence relation 
$$
\mathcal X\equiv^{M(\mathcal L)}\mathcal Y\iff (\exists\sigma\in\Iso(\bbU))\,\sigma\cdot\mathcal X=\mathcal Y.
$$
The following theorem was inspired by   \cite[\S 2]{GaKe}.

\begin{theorem}\label{t.bga}
We have $\simeq^{M(\mathcal L)}\,\leq_B\,\equiv^{M(\mathcal L)}$, and so $\simeq^{M(\mathcal L)}$ is below a group action.
\end{theorem}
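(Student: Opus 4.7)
The plan is to prove something stronger than the stated reducibility, namely that the two equivalence relations $\simeq^{M(\mathcal L)}$ and $\equiv^{M(\mathcal L)}$ actually coincide on $M(\mathcal L)$; the Borel reduction is then witnessed by the identity map, which is trivially Borel. The inclusion $\equiv^{M(\mathcal L)}\subseteq\,\simeq^{M(\mathcal L)}$ is immediate: if $\sigma\in\Iso(\bbU)$ satisfies $\sigma\cdot\bbX=\bbY$, then $\sigma$ maps $X$ bijectively onto $Y$ as an isometry, and $\sigma\cdot F_n^{\bbX}=F_n^{\bbY}$ by the definition of the diagonal action, so $\sigma\restrict X$ is an isomorphism of Polish $\mathcal L$-structures.

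The substantive inclusion is the reverse. Given an isomorphism $h:X\to Y$ of Polish $\mathcal L$-structures $\bbX,\bbY\in M(\mathcal L)$, I need to produce a $\sigma\in\Iso(\bbU)$ extending $h$. Once such an extension is in hand, it automatically satisfies $\sigma\cdot\bbX=\bbY$: the diagonal action of $\sigma$ on tuples from $X$ coincides with that of $h$ (since $\sigma\restrict X=h$), and $h$ carries $F_n^{\bbX}$ onto $F_n^{\bbY}$ by hypothesis, so $\sigma\cdot F_n^{\bbX}=h(F_n^{\bbX})=F_n^{\bbY}$. The existence of the extension is a classical property of the Urysohn space: every isometry between closed subsets of $\bbU$ extends to a global isometry of $\bbU$. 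This is precisely the ingredient used by Gao and Kechris in \cite{GaKe} to identify the isometry relation on Polish metric spaces with the orbit equivalence of $\Iso(\bbU)$ acting on $F(\bbU)\setminus\{\EMPTY\}$, and may be proved by a standard back-and-forth argument using the finite ultrahomogeneity of $\bbU$ together with its universality for separable metric spaces.

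I expect the main obstacle to be nothing more than a careful invocation of this Urysohn extension property, since the authors themselves flag \cite{GaKe} as the inspiration for the result. Once the extension is available, the verification that the identity $M(\mathcal L)\to M(\mathcal L)$ is a Borel reduction is cost-free, and one concludes that $\simeq^{M(\mathcal L)}$ is below the action of the Polish group $\Iso(\bbU)$ on the standard Borel space $M(\mathcal L)$.
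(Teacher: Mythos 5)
There is a genuine gap, and it sits exactly at the step you flag as ``a careful invocation of this Urysohn extension property.'' The Urysohn space is ultrahomogeneous with respect to \emph{finite} subsets (and, by a theorem of Huhuna\v{s}vili, with respect to compact subsets), but it is \emph{not} true that every isometric bijection between arbitrary closed subsets of $\bbU$ extends to an autoisometry of $\bbU$. A quick counterexample: $\bbU$ contains a proper closed subset $B$ isometric to $\bbU$ itself; an isometric bijection $h:\bbU\to B$ has the whole space as its domain, so the only candidate extension is $h$ itself, which is not surjective onto $\bbU$ and hence not in $\Iso(\bbU)$. Consequently the relations $\simeq^{M(\mathcal L)}$ and $\equiv^{M(\mathcal L)}$ do \emph{not} coincide, and the identity map is not a reduction. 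This is also not what Gao--Kechris prove: they show the isometry relation on $F(\bbU)$ is Borel \emph{reducible} to (in fact bireducible with) the orbit equivalence relation of $\Iso(\bbU)$ acting on $F(\bbU)$, precisely because the two relations differ; their reduction re-embeds each closed set into $\bbU$ in a distinguished ``generic'' position.

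The paper's proof does the analogous thing for structures: it constructs a nontrivial Borel map $\bbX\mapsto\bbX'$ that replaces the domain $X$ by an isometric copy sitting in $\bbU$ in \emph{Katetov position}, meaning (Lemma \ref{l.unifhom}.(3)) that any isometric bijection between two such copies does extend to an autoisometry of $\bbU$; the relations $F_n$ are transported along this re-embedding. The real technical content, which your proposal bypasses entirely, is twofold: (i) the existence of such distinguished embeddings, via the Katetov one-point-extension construction, and (ii) the fact that one can select them \emph{in a Borel way} uniformly in $\bbX$, which requires the homogeneous selection principle of Theorem~\ref{t.hsp} in the appendix together with the Kuratowski--Ryll-Nardzewski theorem. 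Your argument for $\equiv^{M(\mathcal L)}\subseteq\simeq^{M(\mathcal L)}$ and for the transport of the relations $F_n$ under a global extension is fine, but without the re-embedding step the reduction itself is missing.
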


To prove this, we first need a uniformly Borel version of injective universality and homogeneity of $\bbU$. This will also be used several times later. For a countable set $A$, define
$$
\mathcal M_A=\{d\in \bbR^{A\times A}: d\text{ is a metric on }A\}.
$$
This is easily seen to form a $G_\delta$ subset of $\bbR^{A\times A}$, whence $\mathcal M_A$ is Polish in the subspace topology. When $A=\bbN$ we let $\mathcal M=\mathcal M_\bbN$.

\begin{lemma}\label{l.unifhom}
There are Borel functions $\vartheta_n:\mathcal M\to\bbU$, $n\in\bbZ$, such that for all $d,d'\in\mathcal M$:
\begin{enumerate}
\item the set $\{\vartheta_n(d):n\in\bbZ\}$ is dense in $\bbU$;
\item $\delta(\vartheta_n(d),\vartheta_m(d))=d(n,m)$ for all $n,m\in\bbN$;
\item any isometric bijection 
$$
\sigma:\overline{\{\vartheta_n(d):n\in\bbN\}}\to\overline{\{\vartheta_n(d'):n\in\bbN\}}
$$ extends to an isometric automorphism of $\bbU$. 
\end{enumerate}
\end{lemma}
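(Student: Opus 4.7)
The plan is to split the construction according to the sign of $n$. For non-negative $n$, I will build $\vartheta_n(d)$ so that $n\mapsto \vartheta_n(d)$ is an isometric embedding of $(\bbN,d)$ into $\bbU$ that depends Borel-measurably on $d$. For negative $n$, I will set $\vartheta_n(d)$ equal to a fixed enumeration of a countable dense subset of $\bbU$, independent of $d$. The second piece immediately gives clause (1), the first gives (2), and clause (3) will fall out of the $\omega$-homogeneity of $\bbU$ at the level of countable subsets, which I explain below.

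The central technical step is to produce, in a genuinely Borel way, a ``one-point extension'' selector for $\bbU$: a Borel function $\mathrm{ext}$ which, given a finite tuple $\bar y=(y_1,\ldots,y_k)\in\bbU^k$ and a vector of radii $\bar r=(r_1,\ldots,r_k)$ satisfying the compatibility conditions $|r_i-r_j|\leq \delta(y_i,y_j)\leq r_i+r_j$, outputs a point $\mathrm{ext}(\bar y,\bar r)\in\bbU$ with $\delta(\mathrm{ext}(\bar y,\bar r),y_i)=r_i$ for every $i$. The existence of such a point is Urysohn's finite one-point extension property; I expect the Borel dependence to be the main obstacle, since it requires applying the Kuratowski--Ryll-Nardzewski selection theorem to the closed-valued map $(\bar y,\bar r)\mapsto\{y\in\bbU:\delta(y,y_i)=r_i\;\forall i\}$ and checking its weak-measurability hypothesis, which in turn rests on a uniform-in-parameters version of Urysohn's extension (alternatively, one can work inside a concrete presentation of $\bbU$, such as the Kat\v etov tower over a countable rational metric space, where one-point extensions are canonical and Borel measurability is automatic). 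With $\mathrm{ext}$ in hand, fix $u_0\in\bbU$, set $\vartheta_0(d)=u_0$, and for $n\geq 1$ put
\[
\vartheta_n(d)=\mathrm{ext}\bigl((\vartheta_0(d),\ldots,\vartheta_{n-1}(d)),\,(d(n,0),\ldots,d(n,n-1))\bigr).
\]
A straightforward induction, using that $d\in\mathcal M$ is a metric on $\bbN$, shows the compatibility condition holds at every stage and that $\delta(\vartheta_n(d),\vartheta_m(d))=d(n,m)$ for all $n,m\in\bbN$, proving (2). For the negative indices, fix once and for all a countable dense sequence $(u_k)_{k\in\bbN}$ in $\bbU$ and set $\vartheta_{-k-1}(d)=u_k$, yielding (1).

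For (3), any isometric bijection $\sigma:\overline{\{\vartheta_n(d):n\in\bbN\}}\to\overline{\{\vartheta_n(d'):n\in\bbN\}}$ restricts to an isometric bijection between two countable subsets of $\bbU$, and I will extend this to an element of $\Iso(\bbU)$ by a standard back-and-forth over a countable dense sequence of $\bbU$: at each finite stage, either the ``forth'' or the ``back'' move amounts to realizing a Kat\v etov function on a finite subset of $\bbU$, which is exactly Urysohn's finite one-point extension property. The resulting isometry between countable dense subsets of $\bbU$ extends uniquely and continuously to an isometric automorphism of $\bbU$, which agrees with $\sigma$ on $\{\vartheta_n(d):n\in\bbN\}$ and hence, by density, on the entire closure. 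Thus no additional property of the Borel construction of $\vartheta$ is needed to secure (3).
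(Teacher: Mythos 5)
Your construction of the non-negative $\vartheta_n$ via a Borel one-point-extension selector is adequate for clause (2), and padding with a fixed dense sequence gives (1). The gap is in clause (3), and it is fatal: you assert that \emph{any} isometric bijection between the closed sets $\overline{\{\vartheta_n(d):n\in\bbN\}}$ and $\overline{\{\vartheta_n(d'):n\in\bbN\}}$ extends to an automorphism of $\bbU$ by finite back-and-forth, ``so that no additional property of the Borel construction is needed.'' This is false. The Urysohn space is homogeneous with respect to \emph{compact} subsets (Huhunai\v{s}vili's theorem), but not with respect to arbitrary closed subsets, and the closures here are completions of arbitrary metrics on $\bbN$, hence arbitrary Polish spaces. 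Concretely: let $X=\bbU\cup\{*\}$ with $\delta(*,x)=1+\delta(x,x_0)$, and embed $X$ isometrically into $\bbU$ by some $e$; then $e(\bbU)$ is a proper closed copy of $\bbU$ inside $\bbU$ (it is complete, hence closed, and omits $e(*)$), and the isometric bijection $e\colon\bbU\to e(\bbU)$ between the closed subsets $\bbU$ and $e(\bbU)$ extends to no automorphism of $\bbU$, since automorphisms are surjective. The flaw in your back-and-forth is that the partial isometry being extended already contains the entire infinite set $\overline{\{\vartheta_n(d):n\in\bbN\}}$ in its domain, so each ``forth'' step must realize a Kat\v etov function supported on an infinite, non-totally-bounded set; the finite one-point extension property of $\bbU$ does not produce such a point, and in general no such point exists.

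Clause (3) is exactly the part of the lemma that constrains the choice of the $\vartheta_n$: the copy of $(\bbN,d)$ must be placed in $\bbU$ in a canonical, Kat\v etov-generic position. This is what the paper's proof arranges: it selects, Borel-uniformly in $d$, a metric $\rho$ on $\bbZ$ extending $d$ on $\bbN$ with the Kat\v etov extension property (every rational Kat\v etov function over a finite subset of $\bbZ$ is approximately realized by a point of $\bbZ$), using the homogeneous selection principle of Theorem A to make this selection Borel; the completion of $(\bbZ,\rho)$ is then $\bbU$, with $\{\vartheta_n(d):n\in\bbN\}$ sitting inside it generically, and (3) follows from an approximate intertwining that exploits this genericity over \emph{both} copies simultaneously. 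Your fixed dense sequence $(u_k)$ assigned to the negative indices bears no controlled relation to the image of $(\bbN,d)$, so the required saturation is absent. Repairing your argument would mean replacing the fixed dense set by one chosen, Borel in $d$, to be Kat\v etov-saturated over $\{\vartheta_n(d):n\geq 0\}$ --- which is essentially the paper's construction.
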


\begin{proof}
We adapt the Katetov construction of the Urysohn metric space; see \cite{Kat} and \cite{GaKe}. Fix an enumeration $(A_n)_{n\in\bbN}$ of all finite non-empty subsets of $\bbZ$, and let $*$ be a point \emph{not} in $\bbZ$. For $d\in \mathcal M$, call a metric $\rho\in\mathcal M_\bbZ$ a \emph{Katetov extension} of $d$ if for all $n\in\bbN$, all $\e>0$, and every metric $\tilde d$ on $A_n\cup\{*\}$ satisfying
\begin{enumerate}
\item $\tilde d(x,y)=d(x,y)$ for all $x,y\in A_n$, and
\item $\tilde d(x,*)\in\bbQ_+$ for all $x\in A_n$
\end{enumerate}
there is is some $i\in\bbZ\setminus A_n$ such that $|\rho(i,x)-\tilde d(*,x)|<\varepsilon$ for all $x\in A_n$.
It follows directly from the definition that the set
$$
U=\{(d,\rho)\in\mathcal M\times \mathcal M_\bbZ:\rho\text{ is a Katetov extension of } d\}
$$
is $G_\delta$. The group $\Sym(\bbZ\setminus\bbN)$ of all permutations of $\bbZ\setminus\bbN$ acts naturally on $\mathcal M_\bbZ$, and the sections $U_d=\{\rho:(d,\rho)\in U\}$ are invariant under this action. The Katetov property guarantees that the $\Sym(\bbZ\setminus\bbN)$-orbits in $U_d$ are dense in $U_d$. It follows from Theorem \ref{t.hsp} (see appendix) that there is a Borel $\psi:\mathcal M\to \mathcal M_\bbZ$ such that $(d,\psi(d))\in U$ for all $d\in\mathcal M$. A standard approximate intertwining/back-and-forth argument (e.g., \cite{Ell:Towards}) shows that for any $(d,\rho)\in U$, the completion of $(\bbZ,\rho)$ is is isometrically isomorphic to $(\bbU,\delta)$, and a Borel coding of this argument analogous to \cite[Theorem 7.6]{FaToTo:Turbulence} shows that there are Borel maps $\vartheta_n:\mathcal M\to\bbU$, $n\in\bbZ$, satisfying (1) and (2) above. Finally, another approximate intertwining/back-and-forth argument can be used to establish (3). 
\end{proof}

\begin{proof}[Proof of Theorem \ref{t.bga}]
We will define a Borel function $M(\mathcal L)\to M(\mathcal L):\bbX\mapsto\bbX'$ such that $\bbX\simeq^{M(\mathcal L)}\bbY$ if and only if  $\bbX'\equiv^{M(\mathcal L)} \bbY'$, and $\bbX\simeq^{M(\mathcal L)}\bbX'$. For $\bbX,\bbY\in M(\mathcal L)$ with domains $X$ and $Y$ finite, the homogeneity of $\bbU$ immediately gives that $\bbX\simeq^{M(\mathcal L)} \bbY$ if and only if  $\bbX\equiv^{M(\mathcal L)}\bbY$, so here we may simply take $\bbX=\bbX'$.

For the case when $\bbX$ has infinite domain, recall that by the Kuratowski--Ryll-Nardzewski theorem we may find Borel functions $\psi_n^{k}: F(\bbU^k)\to\bbU^k$ such that $\{\psi_n^k(F):n\in\bbN\}$ is dense in $F$ whenever $F\neq\EMPTY$. From the $\psi_n^k$ we can define a sequence of Borel functions $\tilde\psi_n:F(\bbU)\to\bbU$ such that whenever $\bbX=(X,(F_k))\in M(\mathcal L)$ and $X$ is infinite, the sequence $\tilde\psi_n(X)$ gives an injective enumeration of a dense subset of $X$ and for all $k$, $\psi^{l_k}_n(F_k)\subseteq \{\tilde\psi_n(X):n\in\omega\}^{l_k}$. Define an element of $d_X\in\mathcal M$ by $d_X(n,m)=\delta(\tilde\psi_n(X),\tilde\psi_m(x))$, and let $\gamma_n(X)=\vartheta_n(d_X)$, $n\in\bbZ$, where the $\vartheta_n$ are provided by Lemma \ref{l.unifhom}. Then the $\gamma_n$ are Borel, and if we define $\bbX'=(X',F_n')$ by letting
$$
X'=\cl(\{\gamma_n(X):n\in\bbN\})
$$
and
\begin{align*}
F_n'=&\cl(\{(\gamma_{i_1}(X),\ldots,\gamma_{i_{l_n}}(X)):\\ 
&(\exists (i_j))(\tilde\psi_{i_1}(X),\ldots,\tilde\psi_{i_{l_n}}(X))\in F_n\}),
\end{align*}
then $\bbX'\simeq^{M(\mathcal L)}\bbX$. That the map $\bbX\mapsto \bbX'$ is Borel follows from the Kuratowski--Ryll-Nardzewski theorem, and that $\bbX\simeq^{M(\mathcal L)}\bbY$ precisely when $\bbX'\equiv^{M(\mathcal L)}\bbY'$ is immediate from Lemma \ref{l.unifhom}.(3).
\end{proof}

\section{Proof of Theorem \ref{T1}}

\subsection{C*-algebras}\label{S.C*} A C*-algebra $A$ can be cast as a Polish structure with domain $A$ as follows: We have the relations $F_0=\{0\}\subseteq A$, $F_{+},F_{\cdot}\subseteq A^3$, which are the graphs of the functions $(a,b)\mapsto a+b$ and $(a,b)\mapsto ab$, respectively, $F_{*}\subseteq A^2$ which is the graph of the map $a\mapsto a^{*}$, and for each $q\in\bbQ(i)=\bbQ+i\bbQ$ the relations 
$$
F_q=\{(a,b)\in A^2: b=qa\}.
$$
The signature is $\mathcal L_{C^*}=(1,3,3,2,2,\ldots)$. It is easy to check directly that the set $M_{C^*}\subseteq M(\mathcal L_{C^*})$ which corresponds to C*-algebras is Borel. (See also Lemma \ref{L.2} below, where a more general statement is proven.) Let $\simeq^{M_{C^*}}$ denote the isomorphism relation in $M_{C^*}$. It can easily be shown that $M_{C^*}$ provides a standard Borel parametrization of the class of separable C*-algebras, in the sense of \cite[Definition~2.1]{FaToTo:Turbulence}). Clearly $\simeq^{M_{C^*}}$ is the restriction of $\simeq^{M(\mathcal L_{C^*})}$ to $M_{C^*}$, and so by Theorem \ref{t.bga} above $\simeq^{M_{C^*}}$ is below a group action.

To finish the proof of Theorem \ref{T1}.(1) we need only show that the parametrization $M_{C^*}$ is weakly equivalent to the parametrizations given in \cite{FaToTo:Turbulence}, in the sense of \cite[Definition 2.1]{FaToTo:Turbulence}. For this, recall from \cite[\S 2.4]{FaToTo:Turbulence} that $\hat \Xi$ is the space of countable normed $\bbQ(i)$-$*$-algebras with domain $\bbN$ which satisfy the C*-axiom. For each $\xi\in\hat\Xi$, let $C^*(\xi)$ be the C*-algebra obtained from completing $\xi$ and extending operations.  Let $\vartheta_n:\mathcal M\to\bbU$ be as in Lemma \ref{l.unifhom}. To each $\xi\in\hat\Xi$ we have the associated $d_\xi\in\mathcal M$ defined by $d_\xi(n,m)=\|n-m\|_\xi$, and the map $\xi\mapsto d_\xi$ is clearly Borel. It is then straightforward to define $\bbX^\xi=(X^\xi,F_0^\xi,F^{\xi}_+,F^\xi_{\cdot},F^\xi_{*},(F^\xi_{q})_{q\in\bbQ(i)})\in M_{C^*}$ directly from $d_\xi$ and the $\vartheta_n$ so that $\bbX^\xi$ is isomorphic to $C^*(\xi)$, letting $F_0=\{\vartheta_{0_\xi}(d_\xi)\}$,
$$
F^\xi_+=\{(\vartheta_n(d_\xi),\vartheta_m(d_\xi),\vartheta_k(d_\xi)): n+_{\xi}m=k\},
$$
and defining $F^\xi_{\cdot}$, $F^\xi_{*}$, $(F^\xi_q)_{q\in\bbQ(i)}$ analogously. The map $\xi\mapsto\bbX^\xi$ is then Borel by the Kuratowski--Ryll-Nardzewski theorem.

For the converse direction, recall from \cite[\S 2.4]{FaToTo:Turbulence} that $\Xi\subseteq\bbR^\bbN$ consists of all real sequences $\eta$ such that for some C*-algebra $A$ and some $y=(y_n)_{n\in\bbN}$ which is dense in $A$, we have that $\eta_n=\|\mathfrak p_n(y)\|_A$, where $(\mathfrak p_n)_{n\in\bbN}$ enumerates the non-commutative $\bbQ(i)$-$*$-polynomials. Let $f_n:F(\bbU)\setminus\{\EMPTY\}\to\bbU$, $n\in \bbN$, be Borel functions provided by the Kuratowski--Ryll-Nardzewski theorem such that $\{f_n(X): n\in \bbN\}$ is a dense subset of $X$, for all $X\in F(\bbU)\setminus\{\EMPTY\}$. For $\bbX\in M_{C^*}$, let $\eta^X_n=\delta(\mathfrak p_n((f_n(X)_{n\in\bbN}),0_\bbX)$ where $F^\bbX_0=\{0_\bbX\}$, and $\mathfrak p_n((f_n(X))$ is the evaluation of $\mathfrak p_n$ at $y=(f_n(X))$ in the C*-algebra coded by $\bbX$. It is easily seen that $X\mapsto \eta^X$ is Borel and $\bbX$ and $\eta^\bbX$ code isomorphic C*-algebras.

\subsection{Banach spaces and Banach algebras}
A separable (real or complex) Banach space $E$ can be cast as a Polish structure $(E,F_0,F_+,(F_q)_{q\in\bbK})$, where $\bbK=\bbQ$ or $\bbK=\bbQ(i)$, in the signature $\mathcal L_{B}=(1,3,2,2,\ldots)$ in analogy with the above parametrization of C*-algebras (omitting multiplication and involution). The subset $M_B(\bbU)$ of $M(\mathcal L_{B},\bbU)$ corresponding to Banach spaces is easily seen to be Borel, too. By an argument similar to that in \S\ref{S.C*}, one sees that this parameterization is equivalent to the standard parameterization of Banach spaces as closed subspaces of $C([0,1])$ (see e.g., \cite{FeLouRo} or \cite{Mell:Computing}).

In a similar vein, letting $\mathcal L_{BA}=(1,3,3,2,2,\ldots)$ (which happens to coincide with $\mathcal L_{C^*}$), we can parametrize separable Banach algebras, as well as involutive separable Banach algebras, by appropriate Borel subsets of $M(\mathcal L_{BA})$.

In all cases, Theorem \ref{t.bga} provides that the isometric 
isomorphism relation is below a group action.

\subsection{Operator spaces and operator systems}\label{S.OS} To handle operator spaces and operator systems we will need a framework slightly more general than that of Polish structures. Formally, this could be handled by introducing \emph{multi-sorted} Polish structures, though here we make due with an ad hoc approach.

For metric spaces $(X,d_X)$ and $(Y,d_Y)$, let $UC(X,Y)$ denote the set of uniformly continuous functions $f:X\to Y$. Identifying each $f\in UC(X,Y)$ with $\graph(f)\subseteq X\times Y$, it is easily checked that $UC(X,Y)$ forms a Borel subset of $F(X\times Y)$.

Let $\bbU_n$, $n\in\bbN$, be a sequence of disjoint copies of the Urysohn metric space, with $\bbU_1=\bbU$. Identify $M_n(\bbU)$ with $\bbU^{n^2}$, and give this the supremum metric. 
We define $\bbOS$ to be the set of sequences $\bbE=(E_n,f_n)_{n\in\bbN}$ such that:
\begin{enumerate}
\item $(E_n,f_n)\in M_B(\bbU_n)\times UC(M_n(\bbU),\bbU_n)$;
\item $f_n(M_n(E_1))= \dom(E_n)$.
\item $f_n\restrict M_n(E_1)$ is linear;
\item for all $A,B\in M_n(\bbC)$ and all $X\in M_n(E_1)$ we have 
$$
\|f_n(AXB)\|_{E_n}\leq\|A\|\|f_n(X)\|_{E_n}\|B\|;
$$
\item for all $m,n\in\bbN$ and all $X\in M_n(E_1)$, $Y\in M_m(E_1)$
$$
\|X\oplus Y\|_{E_{n+m}}=\max\{\|X\|_{E_n},\|Y\|_{E_m}\|\}.
$$
\end{enumerate}
Once again, $\bbOS$ forms a Borel set. Each $\bbE\in\bbOS$ codes a $L^\infty$-matrix-normed space, whence by Ruan's theorem (e.g. \cite[Theorem 13.4]{Paulsen}), $\bbOS$ parametrizes the class of separable operator spaces. Defining $\bbE\simeq^{\bbOS}\bbF$ if and only if $\bbE$ and $\bbF$ are completely isometrically isomorphic, it is easy to check that $\simeq^{\bbOS}$ is analytic, and so $\bbOS$ provides a good standard Borel parametrization of the separable operator spaces. Also, write $\bbE\simeq^{\bbOS}_n\bbF$ if and only $\bbE$ and $\bbF$ are $n$-isometric.

The group $\Iso(\bbU)\times\Iso(\bbU_n)$ acts in a Borel way on\break
 $UC(M_n(\bbU),\bbU_n)$ by
$$
((\sigma,\tau)\cdot f)(x_{ij})=\tau(f(\sigma^{-1}(x_{ij})).
$$
Thus we obtain a Borel action of $\prod_n \Iso(\bbU_n)$ on $\bbOS$ by
$$
(\sigma_n)\cdot(E_n,f_n)=(\sigma_n\cdot E_n,(\sigma_1,\sigma_n)\cdot f_n),
$$
and we write $\bbE\equiv^{\bbOS}\bbF$ if and only there is $(\sigma_n)$ such that $(\sigma_n)\cdot \bbE=\bbF$. Arguing as we did in the proof of Theorem \ref{t.bga}, we obtain that $\simeq^{\bbOS}\leq_B\equiv^{\bbOS}$, which proves that $\simeq^{\bbOS}$ is below a group action.

If we only consider the action of $\prod_{j\leq n} \Iso(\bbU_n)$ on $\bbOS$, and denote by $\equiv^{\bbOS}_n$ the induced orbit equivalence relation, the argument from Theorem \ref{t.bga} gives that $\simeq^{\bbOS}_n\,\leq_B\,\equiv^{\bbOS}_n$, thus showing $\simeq^{\bbOS}_n$ is below a group action. This establishes Theorem \ref{T1}.(2).

\medskip

A standard parameterization of operator systems is obtained by adding the adjoint operation to the structures parametrized by $\bbOS$. Then an analogous argument establishes Theorem \ref{T1}.(3).

\medskip

By adding  a constant for the multiplicative unit to the language one  sees that 
the relation of unital complete isometry between operator systems is below a group action. 
Finally, unital maps between operator systems are completely isometric if and only 
if they are completely positive (see \cite[Proposition~3.6]{Paulsen}) and (4) and (5) of
Theorem~\ref{T1} follow.

\subsection{A remark about models of the logic for metric structures} \label{S.Met} 
The isomorphism relation of countable structures, in the sense of \cite{Hj:Book}, is given by a continuous $S_\infty$-action. 
The group $\Iso(\bbU)$ plays an analogous role for separable models of logic for metric structures. Such models consists of an underlying Polish space $X$, countably many functions $f_n\colon X^n\to X$, and countably many functions  $g_n\colon X^n\to \bbR$ (\emph{relations}). (Assuming  there are infinitely many functions and that the $n$'th function has $X^n$ as its domain is clearly not a loss of generality.) These functions are required to have a prescribed modulus of uniform continuity (see \cite{BYBHU}), 
but we shall ignore this since it is not important for our present purposes. 
To a fixed model $\mathcal X=(X,f_n, g_n: n\in \bbN)$, associate a Polish structure 
$\bbX=(X,(F_n)_{n\in \bbN})$ where $F_n$, $n\in \bbN$, enumerate 
the graphs of the $f_n$, as well as all the sets $\{\bar x\in X^n: g_n(x)\geq q_n\}$ where $(q_n)_{n\in\bbN}$ is a fixed enumeration of the rationals. This map is Borel (between the appropriate spaces) and $\mathcal X$ is isomorphic to $\mathcal Y$ if and only if $\bbX\simeq\bbY$. 
 Also, the map that sends an element of $\Met$ to a metric structure is  Borel. 

The following is proved by a straightforward recursion analogous to the case of classical logic. 

\begin{lemma} \label{L.2} 
If $T$ is a theory in a countable language in the logic of metric structures then 
the set of all $\bbX\in \Met$ that code a model of $T$ is 
Borel. \qed
\end{lemma}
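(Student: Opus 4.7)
The plan is to prove, by induction on the complexity of formulas, that for every formula $\varphi(x_1,\ldots,x_k)$ in the countable language of $T$, the evaluation map
\[
(\bbX,a_1,\ldots,a_k)\mapsto \varphi^{\bbX}(a_1,\ldots,a_k)
\]
is Borel (from the appropriate subset of $\Met\times\bbU^k$ to $\bbR$). Once this is established, the set of $\bbX$ satisfying a single closed condition ``$\varphi=0$'' is Borel, and the set of models of $T$ is the intersection of such Borel sets over the countably many conditions in $T$, hence Borel.

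For the base of the induction, atomic formulas are (continuous combinations of) the metric $d$ and the predicate symbols $g_n$ of the language. Under the coding of $\Met$ described in \S\ref{S.Met}, the predicates $g_n$ are captured by countably many closed sets $\{\bar x: g_n(\bar x)\geq q\}$ for $q\in\bbQ$; from these, $g_n^{\bbX}(\bar a)$ can be recovered in a Borel way as a supremum over rationals. Hence atomic formulas give Borel evaluation maps. The connective step is trivial: if $\varphi=u(\varphi_1,\ldots,\varphi_m)$ with $u\colon\bbR^m\to\bbR$ continuous, then $\varphi^{\bbX}$ is a continuous function of Borel functions, hence Borel.

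The one step requiring care is the quantifier step. Suppose $\varphi(\bar x)=\sup_y\psi(\bar x,y)$ and inductively $(\bbX,\bar a,b)\mapsto\psi^{\bbX}(\bar a,b)$ is Borel. Apply the Kuratowski--Ryll-Nardzewski theorem to obtain Borel selectors $s_n\colon\Met\to\bbU$ with $s_n(\bbX)\in X$ and $\{s_n(\bbX):n\in\bbN\}$ dense in $X$. Since $\psi$ comes with a prescribed modulus of uniform continuity in its last variable,
\[
\varphi^{\bbX}(\bar a)=\sup_{b\in X}\psi^{\bbX}(\bar a,b)=\sup_{n\in\bbN}\psi^{\bbX}(\bar a,s_n(\bbX)),
\]
a countable supremum of Borel functions, and hence Borel. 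The case of $\inf_y$ is symmetric. This is the main technical point, but it is routine given the selectors from Kuratowski--Ryll-Nardzewski and the uniform continuity assumption built into metric structures (as used already in \S\ref{S.OS} and in the proof of Theorem~\ref{t.bga}).

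Finally, since the language is countable, the set of formulas is countable, and a theory $T$ is a countable collection of conditions $\varphi_i=0$ (with each $\varphi_i$ a closed formula). Therefore
\[
\{\bbX\in\Met:\bbX\models T\}=\bigcap_{i}\{\bbX\in\Met:\varphi_i^{\bbX}=0\}
\]
is a countable intersection of Borel sets, completing the proof.
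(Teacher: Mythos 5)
Your proof is correct and is exactly the ``straightforward recursion on formulas'' that the paper merely asserts without writing out: Borel evaluation of atomic formulas from the closed-set coding, closure under continuous connectives, and Kuratowski--Ryll-Nardzewski selectors plus uniform continuity to turn $\sup_y$/$\inf_y$ into countable suprema/infima. The only (minor) point you gloss over is term evaluation --- recovering $f_n^{\bbX}$ in a Borel way from its graph so that atomic formulas $P(t_1,\ldots,t_k)$ with nontrivial terms are handled --- but this is done by the same selector-plus-uniform-continuity device as your quantifier step.
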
 

Combining this with Theorem \ref{t.bga}, we obtain:

\begin{theorem} If $T$ is a theory of the logic of metric structures in a separable 
language, then the isometry relation of models of $T$ 
is Borel-reducible to an orbit equivalence relation of $\Iso(\bbU)$. \qed
\end{theorem}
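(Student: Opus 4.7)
The plan is to combine Lemma \ref{L.2}, the coding of metric structures by Polish structures described in Section \ref{S.Met}, and Theorem \ref{t.bga}. Given $T$ in a separable language, I would first fix the arity sequence $\mathcal L$ attached to the language in the manner of Section \ref{S.Met}: a relation symbol for the graph of each function symbol, and, for each predicate $g_n$, countably many relation symbols of the form $\{\bar x : g_n(\bar x)\ge q\}$ with $q\in\bbQ$. The passage from a separable metric $T$-structure to its coded element of $\Met$ is Borel, and two metric structures are isomorphic exactly when their codes are isomorphic as Polish $\mathcal L$-structures, as already noted in Section \ref{S.Met}.

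Next, Lemma \ref{L.2} yields that the set $\mathrm{Mod}(T)\subseteq \Met$ consisting of codes of models of $T$ is Borel. It is clearly invariant under both $\simeq^{M(\mathcal L)}$ and the natural $\Iso(\bbU)$-action, since the latter action implements isomorphism of Polish structures.

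Finally, I would apply Theorem \ref{t.bga}: the Borel map $\bbX\mapsto\bbX'$ produced there satisfies $\bbX\simeq^{M(\mathcal L)}\bbX'$, so it carries $\mathrm{Mod}(T)$ into itself, and it reduces $\simeq^{M(\mathcal L)}$ to the $\Iso(\bbU)$-orbit equivalence relation $\equiv^{M(\mathcal L)}$. Restricting this reduction to $\mathrm{Mod}(T)$ then gives the desired Borel reduction of isometric isomorphism of separable models of $T$ to an orbit equivalence relation of $\Iso(\bbU)$.

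The theorem is essentially a direct synthesis of ingredients already developed, so I do not expect a serious obstacle. The only point deserving care is the modulus-of-uniform-continuity requirement intrinsic to the metric-structures formalism of \cite{BYBHU}: this can be absorbed into $T$ (leaving $\mathrm{Mod}(T)$ Borel via Lemma \ref{L.2}) or handled by intersecting $\Met$ with the Borel set of tuples of uniformly continuous maps having the prescribed modulus, using that $UC(X,Y)$ is a Borel subset of $F(X\times Y)$ as recorded in Section \ref{S.OS}.
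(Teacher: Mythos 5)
Your proposal is correct and follows exactly the paper's route: code metric structures as Polish $\mathcal L$-structures as in \S\ref{S.Met}, invoke Lemma \ref{L.2} to see that the codes of models of $T$ form a Borel set, and restrict the reduction of Theorem \ref{t.bga} to that set (using that $\bbX\simeq^{M(\mathcal L)}\bbX'$ keeps the image inside the set of models). Your closing remark on the modulus of uniform continuity is a reasonable elaboration of a point the paper explicitly sets aside.
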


\section{Concluding remarks}

(1) Aaron Tikuisis has pointed out that the above approach, using Polish structures, also can be used to provide a new proof that the isomorphism relation for von Neumann algebras with separable predual is below a group action. Previously, in \cite{sato09a}, it was shown that isomorphism of separably acting von Neumann algebras is below an action of the unitary group of $\ell_2$, using a completely different line of argument. It is at present not known if the latter provides a sharper upper bound on complexity than the former.

(2) In \cite{FaToTo:Turbulence} it was proved that separable unital AI-algebras
are not classifiable by countable structures, and that their isomorphism relation is below an action of $\Aut(\mathcal O_2)$. However, we do not at present know if the complexity 
of the classification problem increases as one passes from nuclear to exact C*-algebras, or from exact to arbitrary C*-algebras. In particular, we do not know the answer to:

\begin{question}
Is the isomorphism relation for separable C*-algebras strictly more complicated, as measured by $\leq_B$, than that of nuclear separable C*-algebras?
\end{question}

Even if we restrict to nuclear \emph{simple} separable C*-algebras, we don't know the answer to this.

We may also ask if the upper bound on complexity provided by Theorem \ref{T1}.(1) is actually optimal. By \cite{GaKe}, the Borel actions of $\Iso(\bbU)$ realize the maximal complexity of equivalence relations induced by Polish group actions.

\begin{question}
Is the isomorphism relation for separable C*-algebras maximal among equivalence relations induced by a Polish group action?
\end{question}

We also don't know whether unital  $n$-order isomorphism of separable operator systems
is below a group action for~$n\in \bbN$.

\section{Acknowledgment}

The main result of this paper was proved in the aftermath of the BIRS 
workshop on applications of descriptive set theory to functional analysis. We would 
like to thank the staff at BIRS for providing an inspiring atmosphere.

G.A.E. and I.F.  partially supported by NSERC. 
C.R. was partially supported by NSF grants 1201295 and 0901405.
A.T.  supported in part by grant no. 10-082689 from the Danish Council for Independent Research.

\section*{Appendix}

We prove the generalization of the homogeneous selection principle, \cite[Lemma 6.2 and 6.3]{FaToTo:Turbulence}, which is used to prove Lemma \ref{l.unifhom} above. For a subset $A\subseteq X\times Y$ of a Cartesian product and $x\in X$, we define  $A_x=\{y\in Y: (x,y)\in A\}$. A function $f:\proj_X(A)\to Y$ is a \emph{uniformization} of $A$ if $(x,f(x))\in A$ for all $x\in \proj_X(A)$.

\begin{appthm}\label{t.hsp}
Let $X, Y$ be Polish spaces, $d_Y$ a complete compatible metric on $Y$, and let $A\subseteq X\times Y$ be a $G_\delta$ set.

(1) If for some (any) sequence $(y_n)_{n\in\bbN}$ dense in $Y$ the set
$$
R=\{(x,n,\varepsilon)\in X\times\bbN\times\bbQ_+: (\exists y\in A_x) d_Y(y,y_n)<\varepsilon\}
$$
is Borel, then $\proj_X(A)$ is Borel and $A$ admits a Borel uniformization.

(2) If $G$ is a Polish group acting continuously on $Y$ by Borel automorphisms such that $A_x$ is $G$-invariant for all $x\in X$, and every $G$-orbit of a point $y\in A_x$ is dense in $A_x$, then $R$ defined in (1) is Borel, and so $A$ admits a Borel uniformization.
\end{appthm}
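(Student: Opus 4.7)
For part (1), my plan is to extract a Cauchy sequence of ball centers from the oracle $R$ whose limit lies in $A_x$. Borelness of $\proj_X(A)$ is immediate, since $\proj_X(A)=\bigcup_{n,\varepsilon}\{x:(x,n,\varepsilon)\in R\}$ is a countable union of Borel sets. For the Borel uniformization, write $A=\bigcap_k W_k$ with $W_k\subseteq X\times Y$ open and fix a countable base $(V_j\times O_j)_{j\in\bbN}$ of open rectangles for $X\times Y$. For each $x\in\proj_X(A)$, I would inductively Borel-select, by a Kuratowski--Ryll-Nardzewski style least-index procedure, a triple $(n_k(x),q_k(x),j_k(x))$ at stage $k$ enforcing: $q_k(x)<2^{-k}$; $x\in V_{j_k(x)}$, $V_{j_k(x)}\times O_{j_k(x)}\subseteq W_k$, and $\overline{B(y_{n_k(x)},q_k(x))}\subseteq O_{j_k(x)}$; closed balls are nested; and $(x,n_k(x),q_k(x))\in R$. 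Each condition is Borel in $x$ (the last by hypothesis), and a valid choice at every stage exists because any witness $y^*\in A_x\cap B(y_{n_k(x)},q_k(x))$ lies in $W_{k+1}$, hence in some basic box $V_j\times O_j\subseteq W_{k+1}$, around which a small nested ball through a nearby $y_n$ fits. The Cauchy limit $f(x):=\lim_k y_{n_k(x)}$ is Borel, and since $(x,f(x))\in V_{j_k(x)}\times O_{j_k(x)}\subseteq W_k$ for every $k$, we conclude $(x,f(x))\in A$.

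For part (2), the plan is to deduce Borelness of $R$ from the key equivalence that $G$-invariance and density of orbits force: for every open $U\subseteq Y$ and every $x$ with $A_x\neq\EMPTY$,
\[
A_x\cap U\neq\EMPTY\iff A_x\subseteq G\cdot U.
\]
The backward direction is trivial; for the forward, pick $y^*\in A_x\cap U$ and any $y\in A_x$. Since $Gy$ is dense in $A_x\ni y^*$ by hypothesis, $Gy$ meets the open $U$, so $y\in G\cdot U$, and $G$-invariance of $A_x$ yields $A_x\subseteq G\cdot U$. Setting $V:=G\cdot U$ (open and $G$-invariant by continuity), this splits $\proj_X(A)$ into two disjoint $\Sigma^1_1$ pieces
\[
R_{n,\varepsilon}=\proj_X(A\cap(X\times B(y_n,\varepsilon)))\quad\text{and}\quad\proj_X(A\cap(X\times V^c)),
\]
so once $\proj_X(A)$ is itself Borel, Lusin separation immediately upgrades each piece, making all of $R$ Borel.

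The hard step is thus to show $\proj_X(A)$ is Borel under the hypothesis of (2); this is the true content of the statement, as part (1) then applies cleanly. My plan is to adapt the approach in \cite[Lemmas~6.2 and 6.3]{FaToTo:Turbulence}: use the $G_\delta$ representation $A=\bigcap_k W_k$ together with $G$-invariance and density of orbits to mount a Vaught-transform / Baire-category computation that Borelly detects nonemptiness of $A_x$. The point is that under the hypothesis, for any candidate $y\in Y$ one can Borelly test whether $y\in A_x$ and, using that $Gy$ must be dense in $A_x$ whenever $y\in A_x$, Borelly test whether $y$ witnesses $A_x\neq\EMPTY$; a Choquet-style game in $Y$, whose winning condition is Borel thanks to the action, then produces a generic witness. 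Given Borelness of $R$, the Borel uniformization of $A$ follows by part~(1).
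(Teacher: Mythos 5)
Your part (1) is correct and is essentially the paper's argument: a recursive, least-index Borel selection of nested shrinking balls that meet $A_x$ (tested via the oracle $R$) and are trapped inside the successive open sets of the $G_\delta$ presentation, with the uniformization obtained as the limit of the centers. Your use of basic open rectangles $V_j\times O_j\subseteq W_k$ is a slightly cleaner way to make the containment condition visibly Borel than the paper's condition $B(y_{n_i(x)},\varepsilon_i(x))\subseteq (U_i)_x$, but the construction is the same.

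Part (2) is where there is a genuine gap. Your key equivalence ($A_x\cap U\neq\EMPTY\iff A_x\subseteq G\cdot U$ for nonempty $A_x$) is exactly the paper's observation, phrased via $G\cdot U$ instead of via a countable dense $G_0\subseteq G$; the paper simply notes that this rewrites $R$ as the coanalytic set $\{(x,n,\varepsilon):(\forall y\in A_x)(\exists g\in G_0)\, d(g\cdot y,y_n)<\varepsilon\}$, so that $R$ is both analytic and coanalytic and hence Borel by Suslin's theorem --- no Lusin separation and no separate treatment of $\proj_X(A)$ is invoked. You are right that this rewriting (and equally your two-piece decomposition) is only valid over $\{x:A_x\neq\EMPTY\}$, so that for sets $A$ with some empty sections one must additionally know that $\proj_X(A)$ is Borel. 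But having correctly identified this as ``the true content of the statement,'' you do not prove it: the final paragraph is a plan (``mount a Vaught-transform / Baire-category computation,'' ``a Choquet-style game \dots then produces a generic witness'') with no actual argument, and the circularity is real --- $\proj_X(A)=\bigcup_{n,\varepsilon}R_{n,\varepsilon}$, so Borelness of $\proj_X(A)$ and of the $R_{n,\varepsilon}$ are entangled, and Lusin separation alone only yields $R_{n,\varepsilon}=B_{n,\varepsilon}\cap\proj_X(A)$, which does not break the circle. The clean way out is to observe that in the only use of the theorem (Lemma 2.3, where $U_d\neq\EMPTY$ for every metric $d$) one has $\proj_X(A)=X$, under which hypothesis both your argument and the paper's one-line Suslin argument close immediately; if you want the statement for general $A$, you must either add the hypothesis that every section is nonempty or actually supply the missing proof that $\proj_X(A)$ is Borel. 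As it stands, your part (2) is incomplete at the step you yourself flagged as the hard one.
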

\begin{proof}
(1) We may assume that $d_Y$ is bounded by $\frac 1 2$. Clearly $\proj_X(A)=\{x\in X: (\exists n) (x,n,1)\in R\}$, so this set is Borel. To construct the uniformization, fix open sets $U_n\subseteq X\times Y$ such that $\bigcap_{n\in\bbN} U_n=A$, and fix an enumeration $(q_n)_{n\in\bbN}$ of $\bbQ_+$. Let $B(y,\varepsilon)$ denote the open $d_Y$-ball of radius $\varepsilon$ around $y\in Y$. We recursively define Borel maps $x\mapsto n_i(x)\in\bbN$ and $x\mapsto \varepsilon_i(x)\in\bbQ_+$, $i\in\bbN$, on $\proj_X(A)$, satisfying:
\begin{enumerate}
\item $n_i(x)=0$, $\varepsilon_0(x)=1$;
\item $\varepsilon_i(x)\leq \frac 1 {2^i}$;
\item $B(y_{n_i(x)},\varepsilon_i(x))\subseteq (U_i)_x$;
\item $(x,n_i(x),\varepsilon_i(x))\in R$;
\item $\overline{B(y_{n_{i+1}(x)},\varepsilon_{i+1})}\subseteq B(y_{n_i(x)},\varepsilon_i(x))$.
\end{enumerate}
If this can be done then $f(x)=\lim_{i\to\infty} y_{n_i(x)}$ is the desired Borel uniformization. Suppose $n_i(x)$ and $\varepsilon_i(x)$ have been defined for $i\leq k$. Let 
$$
z\in B(y_{n_k(x)},\varepsilon_k(x))\cap A_x
$$
and let $0<\delta\leq 2^{k+1}$ be such that $2\delta+d_Y(z,y_{n_k(x)})<\varepsilon_k(x)$ and $B(z,2\delta)\subseteq U_{k+1}$. If $d(y_n,z)<\delta$, then $\overline{B(y_n,\delta)}\subseteq B(y_{n_k(x)},\varepsilon_k(x))$, $B(y_n,\delta)\cap A_x\neq\EMPTY$, and $B(y_n,\delta)\subseteq U_{k+1}$. This shows that (1)--(5) above can be satisfied, and so we can define $n_{k+1}(x)$ and $\varepsilon_{k+1}(x)=q_n$, where $n_{k+1}(x)$ and $n$ are least possible satisfying (1)--(5) above. Since each requirement (1)--(5) are Borel, the maps $x\mapsto n_{k+1}(x)$ and $x\mapsto \varepsilon_{k+1}(x)$ are Borel, as required.

(2) It is clear from the definition that $R$ is analytic. Since
$$
R=\{(x,n,\varepsilon): (\forall y\in A_x)(\exists g\in G_0) d(g\cdot y,y_n)<\varepsilon\}
$$
for a countable dense $G_0\subseteq G$
it is also coanalytic, whence $R$ is Borel.
\end{proof}

\bibliographystyle{amsplain}
\bibliography{isom-bga}

\end{document}